\documentclass[leqno,12pt,a4paper,twoside]{article}%
\usepackage{amssymb}
\usepackage{amsfonts}
\usepackage{amsmath}
\usepackage{authblk}
\usepackage{graphicx}
\usepackage{ifthen}
\usepackage{comment}
\usepackage{xcolor}
\usepackage{enumitem}
\usepackage{lipsum,multicol}
\usepackage[T1]{fontenc}
\usepackage[english]{babel}
\usepackage[english]{babel}
\setcounter{MaxMatrixCols}{30}
\setlist[itemize]{noitemsep, topsep=0pt}
\setlist[enumerate]{noitemsep, topsep=0pt}
\setlist[itemize]{leftmargin=*}
\setlist[enumerate]{leftmargin=*}
\providecommand{\U}[1]{\protect\rule{.1in}{.1in}}

\providecommand{\norm}[1]{\left\lVert#1\right\rVert}

\providecommand{\pr}[1]{\left(#1\right)} 


\newcommand{\normi}[3]{\norm{#1}_{
		\ifthenelse{\equal{#2}{1}}{H_0^1\pr{\mathcal{O}_{#3}}}{%
			\ifthenelse{\equal{#2}{-1}}{H^{-1}\pr{\mathcal{O}_{#3}}}{}}}}


\makeatletter
\newcommand{\subjclass}[2][2020]{%
	\let\@oldtitle\@title%
	\gdef\@title{\@oldtitle\footnotetext{\textbf{#1 \emph{Mathematics subject classification.}} #2}}%
}
\newcommand{\keywords}[1]{%
	\let\@@oldtitle\@title%
	\gdef\@title{\@@oldtitle\footnotetext{\textbf{\emph{Key words.}} #1.}}%
}
\makeatother


\oddsidemargin 1pt \evensidemargin 1pt \textheight 690pt \textwidth 470pt
\topmargin 1pt \headheight 1pt
\newtheorem{theorem}{Theorem}

\newtheorem{definition}[theorem]{Definition}

\newtheorem{lemma}[theorem]{Lemma}

\newenvironment{proof}[1][Proof]{\noindent\textbf{#1.} }{\ \rule{0.5em}{0.5em}}

\author[1,2,*]{Ionu\c t Munteanu}

\affil[1]{Faculty of Mathematics, Al. I. Cuza University, Bd. Carol I, 11, Iasi 700506, Romania}
\affil[2]{O. Mayer Institute of Mathematics, Romanian Academy, Bd. Carol I, 8, Iasi 700505, Romania}
\affil[*]{e-mail: ionut.munteanu@uaic.ro}
\title{A simple pole-shifting gain matrix $K$  which avoids solving Lyapunov equations}

\date{}

\begin{document}
	\maketitle
	\begin{abstract}
		It is well known that if $A\in\mathbb{C}^{N\times N}$ and $B\in\mathbb{C}^{N\times M}$ form a controllable pair (in the sense that the Kalman matrix $[B\ |\ AB\ | \ \dots\ |\ A^{N-1}B]$ has full rank) then, there exists $K\in\mathbb{C}^{M\times N}$ such that the matrix $A+BK$ has only eigenvalues with negative real parts. The matrix $K$ is not unique, and is usually defined by  a solution of a Lyapunov equation, which, in case of large $N$, is not easily manageable from the computational point of view. In this work, we show that, for general matrices $A$ and $B$, if they satisfy the controllability Kalman rank condition, then $$K=-\overline{B}^\top\sum_{k=1}^N\left[(\overline{A}^\top+\gamma_kI)^{-1}\right]\left\{\sum_{k=1}^N\left[(A+\gamma_kI)^{-1}B\overline{B}^\top(\overline{A}^\top+\gamma_kI)^{-1}\right]\right\}^{-1}$$ ensures that the matrix $A+BK$ has all the eigenvalues with the real part less than $-\gamma_1$. Here, $0<\gamma_1<\gamma_2<\dots<\gamma_N$ are $N$ positive numbers, large enough such that $A+\gamma_kI$ is invertible, for each $k$.
		
	\end{abstract}

\noindent\textbf{Keywords:} Finite-dimensional differential systems, Kalman controllability rank condition, pole-shifting, stabilization, feedback control.

\noindent\textbf{MSC 2020:} 34H15, 93B05, 93B52 
	
	\section{Introduction}
	Let $A\in\mathbb{C}^{N\times N}$ and $B\in\mathbb{C}^{N\times M}$. In this work, we will discuss on the linear time invariant controlled system
	\begin{equation}\label{e1}\dot{x}(t)=Ax(t)+Bu(t),\ t>0;\ x(0)=x_0.\end{equation}In case the uncontrolled $(u=0)$ homogeneous system
	$$\dot{x}(t)=Ax(t)$$ fails to be asymptotically stable, one of the tasks of the control analyst is to use the control $u$ in such a way as to remedy this situation.  Because of simplicity for both implementation and analysis, the
	traditionally favored means for accomplishing this objective is the use of a linear feedback relation 
	$$u(t)=Kx(t),\ t\geq 0$$where the control $u(\cdot)$ is determined as a linear function of the current state $x(\cdot)$. The problem now becomes that of choosing the feedback matrix $K\in\mathbb{C}^{M\times N}$, in such a way that once plugged the feedback law $u=Kx$ into \eqref{e1} it ensures that the solution $x(\cdot)$ to the closed-loop system
	$$\dot{x}(t)=(A+BK)x(t),\ t>0;\ x(0)=x_0;$$ satisfies
	$$\|x(t)\|_N\leq \mathcal{C}e^{-\mu t}\|x_0\|_N,\ \forall t\geq 0,$$where $\mathcal{C},\mu$ are positive constants, and $\|\cdot\|_N$ stands for the euclidean norm in $\mathbb{C}^N.$ In such case, we say that this $u$ asymptotically exponentially stabilizes \eqref{e1}.
	
	The feedback matrix $K$ is not unique. Usually, is solution to a Lyapunov equation which, for higher dimension, is complicated to determine (see    below). Moreover, $A$ and $B$ may have complex entries. This makes even harder to solve the Lyapunov equation.  For details, see e.g. \cite{1}.  In this work, we provide an explicit direct form of 
	such matrix $K$ (see Theorem \ref{t} below), easily manageable from the computational point of view. We emphasize that the problem of finding $K$ for a controllable pair $\left\{A,B\right\}$ is the core of the control theory. Therefore, more or less each result regarding controllability or stabilizability of differential systems rely on such problem.
	
	\section{Theory and the main result}
	Let $\left<\cdot,\cdot\right>_N$ stand for the euclidean scalar product in $\mathbb{C}^N$. We say that a matrix $P\in\mathbb{C}^{N\times N}$ is a Hermitian matrix if $P=\overline{P}^\top.$ Here, for $z\in\mathbb{C}$ we set $\overline{z}$ for its complex conjugate; and for a matrix $P$ we set $P^\top$ for its transpose. We say that a Hermitian matrix $P$ is positive definite (positive semi-definite) if $\left<Pz,z\right>_N>0$ for all $z\in\mathbb{C}^N\setminus\left\{0\right\}$ ($\left<Pz,z\right>_N\geq 0,\ \forall z\in\mathbb{C}^N$, respectively). Below, $I$ stands for the identity matrix, while $O$ stands for the matrix with all entries equal to zero.
	\begin{definition}
		We say that the system \eqref{e1} is controllable in time $T>0$ if for any $x_0,x_1\in\mathbb{C}^N$, there exists  $u\in L^2(0,T;\ \mathbb{C}^N)$ such that the corresponding solution of \eqref{e1} satisfying $x(0)=x_0$ also satisfies $x(T)=x_1$.
		\end{definition}
	The controllability of \eqref{e1} implies the asymptotic exponential stabilizability of \eqref{e1}.
	It is well known that the controllability of \eqref{e1} is equivalent to the Kalman rank condition, i.e., the matrix
	$$[B\ |\ AB\ |\ A^2B\ |\ \dots |\ A^{N-1}B]\in \mathbb{C}^{N\times NM}$$ has full rank. (For details on control problems associated to linear differential systems one can check, e.g., \cite{1}.)  This implies that the Kalman controllability matrix associated to the pair $\left\{-\rho I-A,B\right\}$ is also of full rank, for all $\rho\in\mathbb{C}$. Let $\rho>0$ be sufficiently large such that $-\rho I-A$ has all the eigenvalues with negative real part. The above implies that 
	the controllability Gramian matrix
	$$W:=\int_0^\infty e^{(-\rho I-A)t}B\overline{B}^\top e^{(-\rho I-\overline{A}^\top) t}dt $$ is positive definite, in particular is invertible. In this case, the linear feedback control law
	$$u=\frac{1}{2}\overline{B}^\top W^{-1}x=Kx$$asymptotically exponentially stabilizes \eqref{e1}. It is easy to see that $W$ satisfies the so-called Lyapunov equation
	$$(-\rho I-A)W+W(-\rho I-\overline{A}^\top)=-B\overline{B}^\top.$$ In order to get the form of $W$ (and implicitly, the form of $K$) one must deal with this Lyapunov equation. This can be solved by matrix factorization methods, in particular the Bartels-Stewart algorithm can be used, see \cite{2}. However, the computational cost of such algorithm, $N^3$ flops, can be prohibitive for large $N$. Therefore, the controllability Gramian matrix $W$, which defines the matrix $K$, although  is expressed in a simple explicit form, from the computational point of view it might be quite challenging.
	
	In this work, we want to design a better (from the computational point of view) matrix $K$. More exactly, we want to show that
	\begin{theorem}\label{t}
		Assume that the system \eqref{e1} is controllable. Then, the control $u=Kx,$ where
		$$K=-\overline{B}^\top\sum_{k=1}^N\left[(\overline{A}^\top+\gamma_kI)^{-1}\right]\left\{\sum_{k=1}^N\left[(A+\gamma_kI)^{-1}B\overline{B}^\top(\overline{A}^\top+\gamma_kI)^{-1}\right]\right\}^{-1}$$ once plugged into \eqref{e1} ensures the exponential asymptotic stability in \eqref{e1}, with the decay rate $-\gamma_1$. Here, $0<\gamma_1<\gamma_2<\dots<\gamma_N$ are $N$ positive numbers, large enough such that $A+\gamma_kI$ is invertible for each $k$.
	\end{theorem}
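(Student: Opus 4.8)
The plan is to read an explicit Lyapunov-type identity for the closed loop directly off the given formula for $K$, and then to reduce the spectral claim to a statement about a Hermitian negative semidefinite matrix. Throughout I write $R_k:=(A+\gamma_k I)^{-1}$; since each $\gamma_k$ is real one has $(\overline A^\top+\gamma_k I)^{-1}=\overline{R_k}^\top$, so with $S:=\sum_{k=1}^N R_k$ and
\[ Q:=\sum_{k=1}^N R_k B\,\overline B^\top\,\overline{R_k}^\top \]
the matrix in the statement reads $K=-\overline B^\top\,\overline S^\top Q^{-1}$. Note $Q$ is Hermitian and $\left<Qz,z\right>_N=\sum_{k=1}^N\|\overline B^\top\overline{R_k}^\top z\|_N^2\ge 0$, so $Q$ is positive semidefinite; the whole argument hinges on first upgrading this to positive definiteness, so that $K$ is well defined.

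First I would prove that $Q$ is invertible, and this is the step that uses controllability together with the fact that $N$ \emph{distinct} shifts $\gamma_1<\dots<\gamma_N$ are taken. If $\left<Qz,z\right>_N=0$ then $\overline B^\top\overline{R_k}^\top z=0$ for every $k$. The map $\mu\mapsto\overline B^\top(\overline A^\top+\mu I)^{-1}z$ has numerator $\overline B^\top\operatorname{adj}(\overline A^\top+\mu I)\,z$, a vector polynomial of degree at most $N-1$ in $\mu$; it vanishes at the $N$ distinct points $\gamma_1,\dots,\gamma_N$, hence vanishes identically. Expanding $(\overline A^\top+\mu I)^{-1}$ in powers of $\mu^{-1}$ then forces $\overline{(A^jB)}^\top z=0$ for all $j\ge 0$, i.e.\ $z$ annihilates the range of $[B\,|\,AB\,|\dots|\,A^{N-1}B]$; the Kalman rank condition gives $z=0$. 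Thus $Q$ is invertible.

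The core computation is the algebraic identity for the closed loop. From $AR_k=I-\gamma_k R_k$ I get $(A+\gamma_1 I)R_k=I-(\gamma_k-\gamma_1)R_k$, whence
\[ (A+\gamma_1 I)Q=B\,\overline B^\top\,\overline S^\top-G,\qquad G:=\sum_{k=2}^N(\gamma_k-\gamma_1)\,R_k B\,\overline B^\top\,\overline{R_k}^\top. \]
Since $KQ=-\overline B^\top\overline S^\top$, we have $BKQ=-B\,\overline B^\top\,\overline S^\top$, and adding these cancels the $B\,\overline B^\top\,\overline S^\top$ term to leave the clean relation $(A+BK+\gamma_1 I)Q=-G$, i.e.\ $A+BK+\gamma_1 I=-GQ^{-1}$. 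Because $\gamma_k>\gamma_1$ for $k\ge 2$, the matrix $G$ is Hermitian and positive semidefinite.

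The spectral conclusion is then immediate: $-GQ^{-1}$ is similar, via $Q^{1/2}$, to the Hermitian negative semidefinite matrix $-Q^{-1/2}GQ^{-1/2}$. Hence $A+BK+\gamma_1 I$ has only real, nonpositive eigenvalues and is diagonalizable, so the spectrum of $A+BK$ lies in $(-\infty,-\gamma_1]$, and diagonalizability upgrades this to $\|x(t)\|_N\le\mathcal C e^{-\gamma_1 t}\|x_0\|_N$, the asserted decay rate $\gamma_1$. The delicate point — and where I expect the literal statement to need care — is the \emph{strict} inequality $\operatorname{Re}\lambda<-\gamma_1$. Since the residual $G$ carries only the $N-1$ shifts $\gamma_2<\dots<\gamma_N$, the polynomial argument above supplies only $N-1$ vanishing conditions and cannot force $G$ to be invertible; indeed for $N=1$ one has $G=O$ and $A+BK=-\gamma_1$ exactly, so a closed-loop eigenvalue can sit precisely on the line $\operatorname{Re}\lambda=-\gamma_1$. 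Strictness therefore seems to require either an extra nondegeneracy hypothesis guaranteeing that $G$ is positive definite, or should be read in the nonstrict sense $\operatorname{Re}\lambda\le-\gamma_1$; because the eigenvalues are real and $A+BK$ is diagonalizable, even the boundary case yields genuine $e^{-\gamma_1 t}$ decay, so the stabilization conclusion itself is unaffected. Pinning down this boundary mode is where I anticipate the real difficulty.
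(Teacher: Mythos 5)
Your proof is correct, and it reaches the paper's conclusion along a partly different route. The closed-loop identity at your core, $(A+BK+\gamma_1 I)Q=-G$ with $G=\sum_{k=2}^N(\gamma_k-\gamma_1)R_kB\overline{B}^\top\overline{R_k}^\top$ positive semidefinite, is exactly the paper's decomposition $A+BK=-\sum_{k=1}^N\gamma_k B_kC^{-1}$ rewritten (add $\gamma_1 I=\gamma_1 CC^{-1}$ and absorb), so that algebra is shared. The genuine differences are two. First, the invertibility of $Q$ (the paper's Lemma \ref{l}, which it calls the main ingredient): the paper assumes non-invertibility, deduces that $\left[(A+\gamma_1I)^{-1}B\ |\ \dots\ |\ (A+\gamma_NI)^{-1}B\right]$ fails to have full rank, and then shows by explicit multiplications with invertible block matrices that this matrix has the same rank as the Kalman matrix --- but it carries this out only for $N=3$, with higher $N$ left to ``a similar fashion''. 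Your argument --- the numerator $\overline{B}^\top\operatorname{adj}(\overline{A}^\top+\mu I)z$ is a vector polynomial of degree at most $N-1$ vanishing at the $N$ distinct shifts, hence identically zero, after which the Neumann expansion at infinity yields $\overline{B}^\top(\overline{A}^\top)^jz=0$ for all $j\ge 0$ --- is cleaner, works verbatim for every $N$, and makes explicit where the distinctness of the $N$ shifts is actually used. Second, the finish: the paper runs a Lyapunov argument with $V(x)=\left<C^{-1}x,x\right>_N$, obtaining $\frac{d}{dt}V\le -2\gamma_1 V$ and hence the $e^{-\gamma_1 t}$ decay; you instead conjugate by $Q^{1/2}$ to see that $A+BK+\gamma_1I$ is similar to the Hermitian negative semidefinite matrix $-Q^{-1/2}GQ^{-1/2}$, getting real eigenvalues at most $-\gamma_1$ together with diagonalizability. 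These are equivalent in substance (your similarity transform is the paper's Lyapunov weight in disguise), but your version extracts slightly more spectral information. Finally, your closing observation is a correct criticism, though of the paper's abstract rather than of the theorem: since $G$ involves only $N-1$ shifts it need not be invertible (for $N=1$ it is $O$ and $A+BK=-\gamma_1 I$ exactly), so the abstract's claim that all closed-loop eigenvalues have real part \emph{strictly} less than $-\gamma_1$ is false in general; what both your argument and the paper's own proof deliver is precisely the theorem as stated, namely decay at rate $\gamma_1$, i.e.\ eigenvalues with real part at most $-\gamma_1$.
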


Firstly, let us note that, from the computational point of view, the matrix $K$ given by Theorem \ref{t} requires substantially less resources than the previous one, $K=\frac{1}{2}\overline{B}^\top W^{-1}$. Moreover, there are no additional hypotheses imposed neither on $A$ nor on $B$. We recall that, in order to ease the solving of the Lyapunov equation, usually additional hypotheses are imposed, e.g., $A$ and $B$ are sparse or structured; or,  $M=1$ which allows the use of Ackermann's pole placement method, see \cite{7}.

 In order to prove the Theorem \ref{t}, the main ingredient is to show that the matrix
$$C:=\sum_{k=1}^N\left[(A+\gamma_kI)^{-1}B\overline{B}^\top(\overline{A}^\top+\gamma_kI)^{-1}\right]$$ is invertible, provided that the system \eqref{e1} is controllable. This is done in Lemma \ref{l} below. 

Once proved that $C$ is invertible,  we have
\begin{equation}\label{e2}\begin{aligned}&A+BK\\&
=A-B\overline{B}^\top\sum_{k=1}^N\left[(\overline{A}^\top+\gamma_kI)^{-1}\right]C^{-1}\\&=A-\sum_{k=1}^N\left[(A+\gamma_kI)(A+\gamma_kI)^{-1}B\overline{B}^\top(\overline{A}^\top+\gamma_kI)^{-1}\right]C^{-1}\\&
=A-\left[AC+\sum_{k=1}^N\gamma_kB_k\right]C^{-1}\\&
=-\sum_{k=1}^N\gamma_kB_kC^{-1},\end{aligned}\end{equation}where $B_k:=(A+\gamma_kI)^{-1}B\overline{B}^\top(\overline{A}^\top+\gamma_kI)^{-1},\ k=1,2,.\dots,N.$ 

By the definition, $C$ is a positive semi-definite matrix, and since it is invertible, it is positive definite. Thus, $C^{-1}$ is positive definite as-well. Also, by the definition, $B_k$ is positive semi-definite, $k=1,2,\dots,N$.   Invoking \eqref{e2}, the system \eqref{e1} with $u=Kx,$  $K$ given by Theorem \ref{t}, is equivalent to
$$\dot{x}(t)=\left[A+BK\right]x(t)=-\sum_{k=1}^N\gamma_kB_kC^{-1}x(t).$$ Scalarly multiplying the above equation by $C^{-1}x(t)$, it yields
$$\begin{aligned}&\frac{1}{2}\frac{d}{dt}\left<C^{-1}x(t),x(t)\right>_N=\left<-\sum_{k=1}^N\gamma_kB_kC^{-1}x(t),C^{-1}x(t)\right>_N\\&
=-\left<\gamma_1\sum_{k=1}^NB_kC^{-1}x(t),C^{-1}x(t)\right>_N\\&
\ +\sum_{k=2}^N(\gamma_1-\gamma_k)\left<B_kC^{-1}x(t),C^{-1}x(t)\right>_N\\&
\leq -\gamma_1\left<C^{-1}x(t),x(t)\right>_N,\ t\geq0,\end{aligned}$$since $\sum_{k=1}^NB_k=C$ and $B_k$ is positive semi-definite and $\gamma_k>\gamma_1,\ k=2,3,...,N.$ This implies that 
$$\left<C^{-1}x(t),x(t)\right>_N\leq e^{-2\gamma_1 t}\left<C^{-1}x_0,x_0\right>_N,\ t\geq0,$$ where taking advantage of the fact that $C^{-1}$ is positive-definite, we immediately conclude that 
$$\|x(t)\|_N\leq \mathcal{C}e^{-\gamma_1t}\|x_0\|_N,\ \forall t\geq0,$$for some constant $\mathcal{C}>0$.

Therefore, in order to prove Theorem \ref{t}, it suffices to show that the matrix $C$ is invertible. More precisely, we show that
\begin{lemma}\label{l}Assume that the system \eqref{e1} is controllable, then, the matrix
	$$\sum_{k=1}^N\left[(A+\gamma_kI)^{-1}B\overline{B}^\top(\overline{A}^\top+\gamma_kI)^{-1}\right]$$ is invertible. Here, $0<\gamma_1<\gamma_2<\dots<\gamma_N$ are $N$ positive numbers, large enough such that $A+\gamma_kI$ is invertible for each $k$.
	\end{lemma}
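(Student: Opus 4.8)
The plan is to prove that $C:=\sum_{k=1}^N(A+\gamma_k I)^{-1}B\overline{B}^\top(\overline{A}^\top+\gamma_k I)^{-1}$ is invertible by showing it is positive definite. Since each summand $B_k=(A+\gamma_k I)^{-1}B\overline{B}^\top(\overline{A}^\top+\gamma_k I)^{-1}$ is manifestly positive semi-definite (it has the form $M_k\overline{M_k}^\top$ with $M_k=(A+\gamma_k I)^{-1}B$), the sum $C$ is positive semi-definite, and invertibility is equivalent to positive definiteness. Equivalently, I must show that $\ker C=\{0\}$, i.e. there is no nonzero $z$ with $\langle Cz,z\rangle_N=0$.

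\medskip

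Suppose $z\in\mathbb{C}^N$ satisfies $\langle Cz,z\rangle_N=0$. Then $\sum_{k=1}^N\norm{\overline{B}^\top(\overline{A}^\top+\gamma_k I)^{-1}z}_M^2=0$, which forces
\begin{equation}\label{eq:planvanish}
\overline{B}^\top(\overline{A}^\top+\gamma_k I)^{-1}z=0,\qquad k=1,2,\dots,N.
\end{equation}
Setting $w_k:=(\overline{A}^\top+\gamma_k I)^{-1}z$, this says $\overline{B}^\top w_k=0$ for every $k$, and the vectors $w_k$ are the ``resolvent images'' of $z$ at the $N$ distinct shifts $\gamma_k$. The goal is to deduce $z=0$ from the controllability (Kalman rank) condition together with the fact that $\overline{B}^\top$ annihilates all $N$ of these resolvent vectors. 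The natural link is the dual (observability) reformulation of controllability: the Kalman rank condition for $\{A,B\}$ is equivalent to the statement that the only $z$ with $\overline{B}^\top(\overline{A}^\top)^{j}z=0$ for $j=0,\dots,N-1$ is $z=0$.

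\medskip

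The key step, therefore, is to convert the $N$ resolvent conditions \eqref{eq:planvanish} into the $N$ power conditions $\overline{B}^\top(\overline{A}^\top)^{j}z=0$. Writing $R_k:=(\overline{A}^\top+\gamma_k I)^{-1}$, I would exploit that the span of $\{R_1z,\dots,R_Nz\}$ coincides with $\mathrm{span}\{z,\overline{A}^\top z,\dots,(\overline{A}^\top)^{N-1}z\}$. This follows because the resolvents at $N$ distinct points $\gamma_1,\dots,\gamma_N$, acting on the fixed vector $z$, are related to the powers $(\overline{A}^\top)^{j}z$ by an invertible (essentially Vandermonde-type) change of coordinates: on the cyclic subspace generated by $z$, each $R_k$ is a rational function of $\overline{A}^\top$, and the matrix expressing $\{R_kz\}$ in terms of $\{(\overline{A}^\top)^jz\}$ has a Cauchy/Vandermonde determinant that is nonzero precisely because the $\gamma_k$ are distinct. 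Concretely, I would argue that $\overline{B}^\top R_k z=0$ for all $k$ implies, via partial-fraction / Vandermonde inversion, that $\overline{B}^\top p(\overline{A}^\top)z=0$ for every polynomial $p$ of degree $\le N-1$, hence in particular $\overline{B}^\top(\overline{A}^\top)^jz=0$ for $j=0,\dots,N-1$.

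\medskip

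Once that reduction is in place, the Kalman rank condition (in its transposed/observability form) immediately yields $z=0$, so $\ker C=\{0\}$ and $C$ is positive definite, hence invertible. I expect the main obstacle to be the middle step: rigorously justifying that annihilation of the $N$ resolvent vectors is equivalent to annihilation of the first $N$ powers of $\overline{A}^\top$ applied to $z$. The cleanest route is probably to write $(\overline{A}^\top+\gamma_k I)^{-1}=\sum_{j\ge0}(-\gamma_k)^{-j-1}(\overline{A}^\top)^{j}$ is \emph{not} available in general (no convergence), so instead I would rely on the finite-dimensional identity: on the Krylov space $\mathcal{K}=\mathrm{span}\{(\overline{A}^\top)^jz\}_{j\ge0}$, which has dimension $d\le N$, the $N$ resolvents $R_1,\dots,R_N$ give at least $d$ linearly independent vectors (because distinct $\gamma_k$ produce distinct rational functions, and a nonzero polynomial relation of degree $<N$ among the $R_k$ forces a contradiction with $\dim\mathcal{K}=d$). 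Handling the degenerate possibility $d<N$ carefully — ensuring enough resolvent conditions remain to cover the full Krylov space — is the delicate point; but since we have exactly $N$ distinct shifts and $d\le N$, there are always enough conditions to conclude $\overline{B}^\top\big|_{\mathcal{K}}=0$, which together with controllability gives $z=0$.
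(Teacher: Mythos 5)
Your proposal is correct in substance, and it takes a genuinely different route from the paper. Both arguments begin identically: positive semi-definiteness of $C$ plus $\left<Cz,z\right>_N=0$ forces $\overline{B}^\top(\overline{A}^\top+\gamma_kI)^{-1}z=0$ for all $k$. From there the paper stays on the "primal" side: it views the conditions as saying that the block matrix $\left[(A+\gamma_1I)^{-1}B\ |\ \dots\ |\ (A+\gamma_NI)^{-1}B\right]$ fails to have full rank, and then shows by explicit multiplications with invertible matrices on the left and right (carried out in detail for $N=3$, with an appeal to analogy for general $N$) that this block matrix has the same rank as the Kalman matrix, contradicting controllability. You instead dualize: you convert the $N$ resolvent annihilation conditions into the observability conditions $\overline{B}^\top(\overline{A}^\top)^jz=0$, $j=0,\dots,N-1$, and invoke the transposed form of the Kalman condition. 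Your route treats all $N$ uniformly and is more conceptual; the paper's is elementary and fully explicit, but its general-$N$ case is left to the reader. The one place where your write-up is still only a sketch is the key spanning claim ($\mathrm{span}\{R_kz\}$ equal to the Krylov space), which you support by appeals to "Cauchy/Vandermonde determinants" and "partial-fraction inversion"; as written this is not yet a proof, and the case distinction over $d=\dim\mathcal{K}<N$ that worries you is a genuine nuisance in that formulation. But the step closes more easily than you fear, with no Krylov space at all: consider the $\mathbb{C}^M$-valued rational function
$$g(\lambda)=\overline{B}^\top(\overline{A}^\top+\lambda I)^{-1}z=\frac{1}{\det(\overline{A}^\top+\lambda I)}\,\overline{B}^\top\mathrm{adj}(\overline{A}^\top+\lambda I)\,z,$$
whose entries, by Cramer's rule, are quotients of polynomials of degree at most $N-1$ by a polynomial of degree $N$. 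Since $g(\gamma_k)=0$ at $N$ distinct points that are not poles, every numerator entry has $N$ distinct roots and hence vanishes identically, so $g\equiv 0$; expanding $g$ in a Laurent series at infinity, $g(\lambda)=\sum_{j\geq 0}(-1)^j\lambda^{-j-1}\,\overline{B}^\top(\overline{A}^\top)^jz$, and matching coefficients gives $\overline{B}^\top(\overline{A}^\top)^jz=0$ for all $j\geq 0$. The observability form of the Kalman rank condition then yields $z=0$, exactly as you intended, and with the degree bound $N-1$ doing automatically the bookkeeping you planned to do by hand on the cyclic subspace.
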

	
	\begin{proof}
		
Recall that the controllability of the system \eqref{e1} implies that the matrix
$$[B\ |\ AB\ |\ A^2B\ | \dots\ |A^{N-1}B] $$ has full rank.

	Let us assume by contradiction that there exists $z\in\mathbb{C}^N,\ z\neq 0,$ such that 
	$$\sum_{k=1}^N(A+\gamma_kI)^{-1}B\overline{B}^\top(\overline{A}^\top+\gamma_kI)^{-1}z=0.$$ That is, the matrix $C$ is not invertible.  Scalarly multiplying in $\mathbb{C}^N$ the above relation by $z$, it yields that
	$$\sum_{k=1}^N \|\overline{B^\top(A^\top+\gamma_kI)^{-1}}z\|_M^2=0$$or, equivalently,
	$$ \overline{B^\top(A^\top+\gamma_kI)^{-1}}z=0,\text{ for all }k=1,2,\dots,N.$$ Or, by taking the conjugate transpose
	$$\overline{z}^\top(A+\gamma_kI)^{-1}B=0,\text{ for all }k=1,2.,\dots,N.$$This implies that there exists a non-trivial linear combination of the lines of the matrix 
	$$\left[(A+\gamma_1I)^{-1}B\ |  \  (A+\gamma_2I)^{-1}B\ | \ \dots\ | \ (A+\gamma_NI)^{-1}B\right]$$ which is zero. Hence, the matrix 
	$$\left[(A+\gamma_1I)^{-1}B\ |  \  (A+\gamma_2I)^{-1}B\ | \ \dots\ | \ (A+\gamma_NI)^{-1}B\right]$$does not have full rank.
	
	It is well-known that  if we multiply a matrix by an invertible square matrix, either on the left or on the right, the resulting matrix has the same rank. That is, the elementary row operations and the corresponding elementary column operations on a matrix preserve the rank of a matrix. Next, we shall perform such elementary operations to show that
	$$\left[(A+\gamma_1I)^{-1}B\ |  \  (A+\gamma_2I)^{-1}B\ | \  \dots\ | \ (A+\gamma_NI)^{-1}B\right]$$ and $$[B\ | \  AB\ | \  A^2B\ | \ \dots\ | \ A^{N-1}B]$$ have the same rank, which clearly is in contradiction with the controllability assumption of the system \eqref{e1}.
	
	 For the ease of presentation, let us assume that $N=3$. For higher $N$, one can argue in a similar fashion. Firstly, let us notice that
	 $$(A+\gamma_iI)^{-1}B-(A+\gamma_jI)^{-1}B=(\gamma_j-\gamma_i)(A+\gamma_iI)^{-1}(A+\gamma_jI)^{-1}B,\ \forall i,j.$$ Multiplying from the right the matrix 
	 $$\left[(A+\gamma_1I)^{-1}B\ |  \  (A+\gamma_2I)^{-1}B\ | \   (A+\gamma_3I)^{-1}B\right] $$ by the invertible matrix 
	 $$\left(\begin{array}{ccc}I&-I&-I\\
	 O&I&O\\
	 O&O&I\end{array}\right),$$here $O$ is the null matrix, then multiplying the result from the left by the invertible matrix $(A+\gamma_1I)$,  we get that
	 \begin{equation}\label{f1}\begin{aligned}&\text{rank}\left[(A+\gamma_1I)^{-1}B\ |  \  (A+\gamma_2I)^{-1}B\ | \   (A+\gamma_3I)^{-1}B\right]\\&
	 =\text{rank}\left[B\ |  \  (\gamma_1-\gamma_2)(A+\gamma_2I)^{-1}B\ | \ (\gamma_1-\gamma_3)(A+\gamma_3I)^{-1}B\right].\end{aligned}\end{equation} Next, multiplying from the right the matrix
	 $$\left[B\ |  \  (\gamma_1-\gamma_2)(A+\gamma_2I)^{-1}B\ | \ (\gamma_1-\gamma_3)(A+\gamma_3I)^{-1}B\right]$$ by the invertible matrix
	 $$\left(\begin{array}{ccc}I&O&O\\
	 O&I&-\frac{\gamma_1-\gamma_3}{\gamma_1-\gamma_2}I\\O&O&I\end{array}\right),$$ then multiplying the result from the left by the invertible matrix $(A+\gamma_2I)(A+\gamma_3I)$,  we deduce that
	 \begin{equation}\label{f2}\begin{aligned}&\text{rank}\left[B\ |  \  (\gamma_1-\gamma_2)(A+\gamma_2I)^{-1}B\ | \ (\gamma_1-\gamma_3)(A+\gamma_3I)^{-1}B\right]\\&
	 =\text{rank}\left[(A+\gamma_2I)(A+\gamma_3I)B\ |  \  (\gamma_1-\gamma_2)(A+\gamma_3I)B\ | \ (\gamma_1-\gamma_3)(\gamma_2-\gamma_3)B\right].\end{aligned}\end{equation} Next, multiplying the matrix
	 $$\left[(A+\gamma_2I)(A+\gamma_3I)B\ |  \  (\gamma_1-\gamma_2)(A+\gamma_3I)B\ | \ (\gamma_1-\gamma_3)(\gamma_2-\gamma_3)B\right]$$ from the right by
	 $$\left(\begin{array}{ccc}I&O&O\\O&\frac{1}{\gamma_1-\gamma_2}I&O\\-\frac{\gamma_3\gamma_2}{(\gamma_1-\gamma_3)(\gamma_2-\gamma_3)}&-\frac{\gamma_3}{(\gamma_1-\gamma_3)(\gamma_2-\gamma_3)}I&\frac{1}{(\gamma_1-\gamma_3)(\gamma_2-\gamma_3)}I\end{array}\right)$$we get that
	 \begin{equation}\label{f3}\begin{aligned}&\text{rank}\left[(A+\gamma_2I)(A+\gamma_3I)B\ |  \  (\gamma_1-\gamma_2)(A+\gamma_3I)B\ | \ (\gamma_1-\gamma_3)(\gamma_2-\gamma_3)B\right]\\&
	 =\text{rank}\left[(A^2+(\gamma_2+\gamma_3)A)B\ |  \  (\gamma_1-\gamma_2)AB\ | \ B\right]\end{aligned}.\end{equation} Finally, multiplying from the right the matrix
	 $$\left[(A^2+(\gamma_2+\gamma_3)A)B\ |  \  (\gamma_1-\gamma_2)AB\ | \ B\right]$$ by
	 $$\left(\begin{array}{ccc}I&O&O\\
	 -\frac{\gamma_2+\gamma_3}{\gamma_1-\gamma_2}I&\frac{1}{\gamma_1-\gamma_2}I&O\\
	 O&O&I\end{array}\right)$$ we get that
	 \begin{equation}\label{f4}\begin{aligned}&\text{rank}\left[(A^2+(\gamma_2+\gamma_3)A)B\ |  \  (\gamma_1-\gamma_2)AB\ | \ B\right]\\&
	 =\text{rank}[A^2B\ | \ AB \ | \ B]\\&
	 =\text{rank}[B\ | \ AB  \ | \ A^2B].\end{aligned}\end{equation}The conclusion follows immediately from \eqref{f1}-\eqref{f4}. Thereby, completing the proof.
\end{proof}

\section{Conclusions} The present work provides a feedback form controller for the stabilization of finite dimensional differential systems. Previous feedback forms, from the existing literature, are under the requirement of a huge effort from the computational point of view. In the present case, we improved a lot this aspect. The result has impact on the design of feedback controllers for the stabilization of finite-dimensional non-linear systems, via the first approximation method see \cite{1}; or for  the stabilization of infinite-dimensional systems via the spectral decomposition method, see starting with the pioneering work \cite{3} up to the highly complex case of the Navier-Stokes equations in \cite{4}. This result is also related to the design of proportional type feedback stabilizing controllers in \cite{5}.

\end{document}